\documentclass[a4paper,12pt]{amsart}

\usepackage{amsfonts}
\usepackage{amsmath}
\usepackage{amssymb}
\usepackage{graphicx}
\usepackage{mathrsfs}
\usepackage{hyperref}

%%%%%%%%%%%%%%%%%%%%%%%%%%
\setlength{\textwidth}{15.2cm}
\setlength{\textheight}{22.7cm}
\setlength{\topmargin}{1mm}
\setlength{\oddsidemargin}{3mm}
\setlength{\evensidemargin}{3mm}
%\setlength{\headsep}{1cm}
%\setlength{\footskip}{1cm}

%%%%%%%%%%%%%%%%%%%%%%%%%%%

\theoremstyle{plain}
 \newtheorem{theorem}{Theorem}[section]

\theoremstyle{definition}
 
 \newtheorem{definition}{Definition}[section]
\theoremstyle{remark}
 \newtheorem{remark}{Remark}[section]
 \numberwithin{equation}{section}

\title[Some Hermite-Hadamard type inequalities]{Some Hermite-Hadamard type inequalities in the class of hyperbolic p-convex functions}

\subjclass[2010]{Primary 26D10; Secondary 26A33, 35A23}

\keywords{Hermite-Hadamard inequality, Hermite-Hadamard-Fej\'{e}r inequality, hyperbolic p-convex function, fractional integral.}

\author[Dragomir]{\bfseries Silvestru Sever Dragomir}
\author[Torebek]{\bfseries Berikbol T. Torebek$^{\text{*}}$}

\address{Silvestru Sever Dragomir \newline College of Engineering and Science, Victoria University, \newline PO Box 14428, Melbourne City, MC 8001, Australia.}
\email{sever.dragomir@vu.edu.au}

\address{Berikbol T. Torebek \newline
Institute of Mathematics and Mathematical Modeling\newline 125 Pushkin str.,
050010 Almaty, Kazakhstan
\newline Al-Farabi Kazakh National University. \newline 71 Al-Farabi ave., 050040 Almaty, Kazakhstan}
\email{torebek@math.kz}

\thanks{$\text{*}$ Corresponding author. E-mail: torebek@math.kz}

\sloppy

\begin{document}

\vspace{18mm} \setcounter{page}{1} \thispagestyle{empty}

\begin{abstract}
In this paper, obtained some new class of Hermite-Hadamard and Hermite-Hadamard-Fej\'{e}r type inequalities via fractional integrals for the p-hyperbolic convex functions. It is shown that such inequalities are simple consequences of Hermite-Hadamard-Fej\'{e}r inequality for the p-hyperbolic convex function.\end{abstract}
\maketitle

\tableofcontents
\section{Introduction}
The inequalities for convex functions due to Hermite and Hadamard are found to be of great importance, for example, see \cite{DP00, PPT92}. According to the inequalities \cite{H1893, H1883},  \begin{itemize}
 \item if $u:\,I\rightarrow \mathbb{R}$ is a convex function on the interval $I\subset \mathbb{R}$ and $a,b\in I$ with $b>a,$ then
\begin{equation}\label{1.1}u\left(\frac{a+b}{2}\right)\leq \frac{1}{b-a}\int\limits^b_a u(y)dy \leq \frac{u(a)+u(b)}{2}.\end{equation}
 \end{itemize}
For a concave function $u$, the inequalities in \eqref{1.1} hold in the reversed direction.
\begin{definition} A function $u : [a, b]\subset \mathbb{R} \rightarrow \mathbb{R}$ is said to be convex if $$u(\mu x+(1-\mu)y)\leq \mu u(x)+(1-\mu)u(y)$$ for all $x, y\in [a,b]$ and $\mu\in [0,1].$ We call $u$ a  concave function if $(-u)$ is convex. \end{definition}
We note that Hadamard's inequality refines the concept of convexity,  and it  follows from Jensen's inequality. The classical Hermite-Hadamard inequality yields estimates for the mean value of a continuous convex function $u : [a, b] \rightarrow \mathbb{R}.$ The well-known inequalities dealing with the integral mean of a convex function $u$ are the Hermite-Hadamard inequalities or its weighted versions. They are also known as Hermite-Hadamard-Fej\'{e}r inequalities.

In \cite{F06}, Fej\'{e}r obtained the  weighted generalization of Hermite-Hadamard inequality \eqref{1.1} as follows.
\begin{itemize}
 \item Let $u:\, [a,b]\rightarrow \mathbb{R}$ be a convex function. Then the inequality
\begin{equation}\label{1.2}u\left(\frac{a+b}{2}\right)\int\limits^b_a v(y)dy\leq\int\limits^b_a u(y) v(y)dy \leq \frac{u(a)+u(b)}{2}\int\limits^b_a v(y)dy \end{equation} holds for a nonnegative, integrable function $v:\, [a,b]\rightarrow \mathbb{R}$, which is symmetric to $\frac{a+b}{2}.$
 \end{itemize}
Clearly, for $v(x)\equiv 1$ on $[a, b]$ we get \eqref{1.1}.

\subsection{Definitions of fractional integrals}\label{ss_fc}
Let us give basic definitions of fractional integrations of the different types.

\begin{definition}\cite{Kilbas} The left and right Riemann--Liouville
fractional integrals $I_{a+} ^\alpha$ and $I_{b-} ^\alpha$ of order $\alpha\in\mathbb R$ ($\alpha>0$) are given by
$$I_{a+} ^\alpha  \left[ f \right]\left( t \right) = {\rm{
}}\frac{1}{{\Gamma \left( \alpha \right)}}\int\limits_a^t {\left(
{t - s} \right)^{\alpha  - 1} f\left( s \right)} ds, \,\,\, t\in(a,b],$$
and
$$ I_{b-}^\alpha  \left[ f \right]\left( t \right) = {\rm{
}}\frac{1}{{\Gamma \left( \alpha \right)}}\int\limits_t^b {\left(
{s - t} \right)^{\alpha  - 1} f\left( s \right)} ds, \,\,\, t\in[a,b),$$
respectively. Here $\Gamma$ denotes the Euler gamma function.
\end{definition}
\begin{definition}\cite{KT16} Let $f\in L_1(a,b).$ The fractional integrals $\mathcal{I}^\alpha_{a+}$ and $\mathcal{I}^\alpha_{b-}$ of order $\alpha\in (0,1)$ are defined by
\begin{equation} \label{I-1}
\mathcal{I}^\alpha_{a+} u(x)=\frac{1}{\alpha}\int\limits^x_a \exp\left(-\frac{1-\alpha}{\alpha}(x-s)\right) u(s)ds,\, x>a
\end{equation}
and
\begin{equation}\label{I-2}
\mathcal{I}^\alpha_{b-} u(x)=\frac{1}{\alpha}\int\limits^b_x \exp\left(-\frac{1-\alpha}{\alpha}(s-x)\right) u(s)ds,\, x<b
\end{equation}
 respectively.\end{definition}

\subsection{Some generalizations of Hermite-Hadamard and Hermite-Hadamard-Fej\'{e}r inequalities}
Here we present some results on the generalization of the above inequalities.

In \cite{SSYB13}, Sarikaya et. al. represented Hermite-Hadamard inequality in Riemann-Liouville fractional integral forms as follows.
\begin{itemize}
 \item Let $u:\,[a,b]\rightarrow \mathbb{R}$ be a positive function and $u\in L^1([a,b]).$ If $u$ is a convex function on $[a,b],$ then the following inequalities for fractional integrals hold
\begin{equation}\label{FHH}u\left(\frac{a+b}{2}\right)\leq \frac{\Gamma(\alpha+1)}{2(b-a)^\alpha}\left[I^\alpha_au(b)+I^\alpha_bu(a)\right] \leq \frac{u(a)+u(b)}{2}\end{equation} with $\alpha>0.$
\end{itemize}
In \cite{I16}, I\c{s}can gave the following Hermite-Hadamard-Fej\'{e}r integral inequalities via fractional integrals:
\begin{itemize}
 \item  Let $u:\,[a,b]\rightarrow \mathbb{R}$ be convex function with $a < b$ and $u \in L^1([a, b]).$ If $v:\,[a,b]\rightarrow \mathbb{R}$ is nonnegative, integrable and symmetric to $(a+b)/2,$ then the following inequalities for fractional integrals hold \begin{equation}\label{FHHF}\begin{split}u\left(\frac{a+b}{2}\right)\left[I^\alpha_av(b)+I^\alpha_bv(a)\right]&\leq \left[I^\alpha_a(uv)(b)+I^\alpha_b(uv)(a)\right]\\& \leq \frac{u(a)+u(b)}{2}\left[I^\alpha_av(b)+I^\alpha_bv(a)\right] \end{split}\end{equation} with $\alpha>0.$
\end{itemize}
In \cite{KT16} the authors obtained the following generalizations of inequality \eqref{1.1} and \eqref{1.2}
\begin{itemize}
  \item Let $u: [a, b]\rightarrow \mathbb{R}$ and $u \in L^1 (a, b).$ If $u$ is a convex function on $[a, b],$ then the following inequalities for fractional integrals $\mathcal{I}^\alpha_{a+}$ and $\mathcal{I}^\alpha_{a+}$ hold:
\begin{equation}\label{FHH2} u\left(\frac{a+b}{2}\right)\leq \frac{1-\alpha}{2\left(1-\exp\left(-\rho\right)\right)}\left[\mathcal{I}^\alpha_a u(b)+\mathcal{I}^\alpha_b u(a)\right]\leq \frac{u(a)+u(b)}{2};
\end{equation}
  \item Let $u:\,[a,b]\rightarrow \mathbb{R}$ be convex and integrable function with $a<b.$ If $w:\,[a,b]\rightarrow \mathbb{R}$ is nonnegative, integrable and symmetric with respect to $\frac{a+b}{2},$ that is,  $w(a+b-x)=w(x),$ then the following inequalities hold
\begin{equation}\label{FHHF2}\begin{split} u\left(\frac{a+b}{2}\right)\left[\mathcal{I}^{\alpha}_a w(b)+\mathcal{I}^{\alpha}_b w(a)\right]& \leq \left[\mathcal{I}^{\alpha}_a \left(u w\right)(b)+\mathcal{I}^{\alpha}_b\left(u w\right)(a)\right]\\&\leq \frac{u(a)+u(b)}{2}\left[\mathcal{I}^{\alpha}_a w(b)+\mathcal{I}^{\alpha}_b w(a)\right].
\end{split}\end{equation}
\end{itemize}
Many generalizations and extensions of the Hermite-Hadamard and Hermite-Hadamard-Fej\'{e}r type inequalities were obtained for various classes of functions using fractional integrals; see \cite{C16,  CK17, HYT14, I16,  JS16, SSYB13, WLFZ12, ZW13} and references therein. In \cite{KS18} Kirane and Samet show that most of those results are particular cases of (or equivalent to) existing inequalities from the literature. These studies motivated us to consider a new class of functional inequalities for hyperbolic p-convex functions generalizing the classical Hermite-Hadamard and Hermite-Hadamard-Fej\'{e}r inequalities.

\subsection{Hyperbolic p-convex functions} We consider the hyperbolic functions of a real argument $x \in \mathbb{R}$ defined by
$$\sinh x:=\frac{e^x-e^{-x}}{2},$$ $$\cosh x:=\frac{e^x+e^{-x}}{2},$$ $$\tanh x:=\frac{\sinh x}{\cosh x},$$ $$\coth x:=\frac{\cosh x}{\sinh x}.$$
\begin{definition}\label{defHCF}\cite{D18a, D18b} We say that a function $f: I \rightarrow \mathbb{R}$ is hyperbolic p-convex (or sub H-function, according with \cite{A16}) on $I,$ if for any closed subinterval $[a, b]$ of $I$ we have
\begin{equation}\label{HCF1}f(x)\leq \frac{\sinh [p(b-x)]}{\sinh [p(b-a)]}f(a)+ \frac{\sinh [p(x-a)]}{\sinh [p(b-a)]}f(b)\end{equation} for all $x\in [a,b].$
\end{definition}
If the inequality \eqref{HCF1} holds with "$\geq$", then the function will be called hyperbolic p-concave on $I.$

Geometrically speaking, this means that the graph of $f$ on $[a, b]$ lies nowhere above the p-hyperbolic function determined by the equation
$$H(x)=H(x; a, b, f):=A\cosh (px)+B\sinh (px),$$ where $A$ and $B$ are chosen such that $H(a)=f(a)$ and $H(b)=f(b).$

If we take $x = (1-t) a + tb \in [a, b], t \in [0, 1],$ then the condition \eqref{HCF1} becomes
\begin{equation}\label{HCF1*}f((1-t)a+tb)\leq \frac{\sinh [p(1-t)(b-a)]}{\sinh [p(b-a)]}f(a)+ \frac{\sinh [pt(b-a)]}{\sinh [p(b-a)]}f(b)\end{equation} for any $t\in [0,1].$
We have the following properties of hyperbolic p-convex function on $I.$
\begin{description}
  \item[(i)] A hyperbolic p-convex function $f : I \rightarrow \mathbb{R}$ has finite right and left derivatives $f'_+(x)$ and $f'_-(x)$ at every point $x \in I$ and $f'_-(x)\leq f'_+(x).$ The
function $f$ is differentiable on $I$ with the exception of an at most countable set.
  \item[(ii)] A necessary and sufficient condition for the function $f : I \rightarrow \mathbb{R}$ to be hyperbolic p-convex function on $I$ is that it satisfies the gradient inequality
  $$f(y)\geq f(x)\cosh[p(y-x)]+K_{x,f}\sin[p(y-x)]$$ for any $x, y \in I,$ where $K_{x,f} \in \left[f'_-(x),f'_+(x)\right].$ If $f$ is differentiable at the point $x$ then $K_{x,f}=f'(x).$
  \item[(iii)] A necessary and sufficient condition for the function $f$ to be a hyperbolic p-convex in $I,$ is that the function $$\varphi(x)=f'(x)-p^2\int\limits_a^xf(t)dt$$ is nondecreasing on $I,$ where $a\in I.$
  \item[(iv)] Let $f : I \rightarrow \mathbb{R}$ be a two times continuously differentiable function on $I.$ Then $f$ is hyperbolic p-convex on $I$ if and only if for all $x\in I$ we have
$$f''(x)-p^2f(x) \geq 0.$$ For other properties of hyperbolic p-convex functions, see \cite{A16}.
\end{description}
Consider the function $f_r : (0,\infty) \rightarrow (0,\infty),$ $f_r (x) = x^r$ with $p \in \mathbb{R}\backslash \{0\}.$ If $r\in (-\infty, 0)\cup[1,\infty)$ the function is convex and if $r \in (0, 1)$ it is concave. We have for $r\in (-\infty, 0)\cup[1,\infty)$
$$f''_r(x)-p^2f_r(x)=r(r-1)x^{r-2}-p^2x^r=p^2x^{r-2}\left(\frac{r(r-1)}{p^2}-x^2\right),\,x>0.$$
We observe that $$f''_r(x)-p^2f_r(x) > 0 \,\,\,\textrm{for} \,\,\,x\in\left(0, \frac{\sqrt{r(r-1)}}{|p|}\right)$$ and $$f''_r(x)-p^2f_r(x) < 0\,\,\, \textrm{for} \,\,\, x\in\left(\frac{\sqrt{r(r-1)}}{|p|},\infty\right),$$ which shows that the power function $f_r$ for $r \in (-\infty,0)\cup[1,\infty)$ is hyperbolic p-convex on $\left(0, \frac{\sqrt{r(r-1)}}{|p|}\right)$ and hyperbolic p-concave on $\left(\frac{\sqrt{r(r-1)}}{|p|},\infty\right).$

If $r \in (0, 1),$ then $$f''_r(x)-p^2f_r(x) < 0$$ for any $x > 0,$ which shows that $f_r$ is hyperbolic p-concave on $(0,\infty).$

Consider the exponential function $$f_\mu (x) = \exp (\mu x)=e^{\mu x}$$ for $\mu\neq 0$ and $x \in \mathbb{R}.$ Then $$f''_\mu(x)-p^2f_\mu(x)=\mu^2e^{\mu x} -p^2e^{\mu x}=(\mu^2-p^2)e^{\mu x},\,x>0.$$
If $|\mu| > |p|,$ then $f_\mu$ is hyperbolic p-convex on $\mathbb{R}$ and if $|\mu| < |p|,$ then $f_\mu$ is hyperbolic p-concave on $\mathbb{R}.$

\subsection{Hermite-Hadamard and Hermite-Hadamard-Fej\'{e}r inequalities for hyperbolic p-convex functions}
In \cite{D18a}, Dragomir obtained Hermite-Hadamard type inequality in the class of hyperbolic p-convex functions as follows
\begin{theorem}\label{th1}
Assume that the function $u:\,I\rightarrow \mathbb{R}$ is hyperbolic p-convex function on $I.$ Then for any $a,b\in I$ we have
\begin{equation}\label{D1}\frac{2}{p}u\left(\frac{a+b}{2}\right)\sinh\left(\frac{p(b-a)}{2}\right)\leq \int\limits_a^bf(x)dx \leq \frac{u(a)+u(b)}{p}\tanh\left(\frac{p(b-a)}{2}\right).\end{equation}
\end{theorem}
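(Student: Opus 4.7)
The plan is to treat the two inequalities separately, using two different characterizations of hyperbolic p-convexity given before the theorem, mirroring the classical proof of the Hermite--Hadamard inequality.

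For the right-hand inequality, I would start from the defining inequality \eqref{HCF1} applied on the subinterval $[a,b]$, namely
\[
u(x)\leq \frac{\sinh[p(b-x)]}{\sinh[p(b-a)]}\,u(a)+\frac{\sinh[p(x-a)]}{\sinh[p(b-a)]}\,u(b), \qquad x\in[a,b],
\]
and integrate in $x$ over $[a,b]$. A direct antiderivative gives
\[
\int_a^b \sinh[p(b-x)]\,dx \;=\; \int_a^b \sinh[p(x-a)]\,dx \;=\; \frac{\cosh[p(b-a)]-1}{p}.
\]
After dividing by $\sinh[p(b-a)]$ and invoking the half-angle identity $\frac{\cosh\theta-1}{\sinh\theta}=\tanh(\theta/2)$ with $\theta=p(b-a)$, the common coefficient of $u(a)$ and $u(b)$ collapses to $\frac{1}{p}\tanh\bigl(\frac{p(b-a)}{2}\bigr)$, which is exactly the upper bound claimed.

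For the left-hand inequality, I would invoke the gradient inequality stated in property~(ii),
\[
u(y)\geq u(x)\cosh[p(y-x)] + K_{x,u}\sinh[p(y-x)],
\]
specialized to the midpoint $x=\frac{a+b}{2}$, and integrate in $y$ from $a$ to $b$. The $\sinh$ term integrates to zero by antisymmetry about the midpoint, while
\[
\int_a^b \cosh\!\left[p\!\left(y-\tfrac{a+b}{2}\right)\right]dy \;=\; \frac{2}{p}\sinh\!\left(\frac{p(b-a)}{2}\right),
\]
so the gradient inequality delivers the lower bound without involving $K_{x,u}$ at all.

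The only real decision is strategic: using the chord inequality \eqref{HCF1} for the upper bound and the tangent/gradient inequality for the lower bound, exactly as in the classical case. Once that choice is made, the argument reduces to two elementary integrals of hyperbolic functions plus the identity $\frac{\cosh\theta-1}{\sinh\theta}=\tanh(\theta/2)$, so I do not expect any substantive obstacle.
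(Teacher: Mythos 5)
Your argument is correct: both computations check out --- the antiderivative $\int_a^b\sinh[p(b-x)]\,dx=\frac{\cosh[p(b-a)]-1}{p}$ together with the half-angle identity $\frac{\cosh\theta-1}{\sinh\theta}=\tanh(\theta/2)$ yields the upper bound, and the gradient inequality of property (ii) (with $\sinh$, correcting the paper's typographical $\sin$) applied at the midpoint yields the lower bound once the odd $\sinh$ term integrates to zero. The paper itself states this theorem without proof, quoting it from \cite{D18a}, and your chord-plus-tangent-line strategy is exactly the standard argument used there, so there is nothing to object to.
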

\begin{remark}Note that, if $p\rightarrow 0$ in \eqref{D1} we get the classical Hermite-Hadamard inequality \eqref{1.1}.\end{remark}

Hermite-Hadamard-Fej\'{e}r type inequalities in the class of hyperbolic p-convex functions was proven in \cite{D18b}:
\begin{theorem}\label{th2} Assume that the function $u:\,I\rightarrow \mathbb{R}$ is hyperbolic p-convex on $I$ and
$a,b\in I.$ Assume also that $w:\,I\rightarrow \mathbb{R}$ is a positive, symmetric and integrable function on $[a, b],$ then we have
\begin{equation}\label{D2}\begin{split}u\left(\frac{a+b}{2}\right)&\int\limits_a^b\cosh\left[p\left(x-\frac{a+b}{2}\right)\right]w(x)dx\\& \leq \int\limits_a^b u(x)w(x)dx \\& \leq \frac{u(a)+u(b)}{2}\cosh^{-1}\frac{p(b-a)}{2}\int\limits_a^b\cosh\left[p\left(x-\frac{a+b}{2}\right)\right]w(x)dx.\end{split}\end{equation}\end{theorem}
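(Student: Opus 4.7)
The plan is to handle the two inequalities separately, following the classical Fej\'er strategy adapted to the hyperbolic setting. Both halves rely on exploiting the symmetry $w(a+b-x)=w(x)$, but they use different characterizations of hyperbolic $p$-convexity: the defining inequality \eqref{HCF1} for the upper bound, and the gradient inequality of property (ii) for the lower bound.

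For the \emph{right} inequality, I would integrate \eqref{HCF1} against the weight $w$. Since
\[
u(x) \leq \frac{\sinh[p(b-x)]}{\sinh[p(b-a)]}u(a) + \frac{\sinh[p(x-a)]}{\sinh[p(b-a)]}u(b),
\]
multiplying by $w(x)\ge 0$ and integrating yields an upper bound of the form $\bigl(u(a) J_1 + u(b) J_2\bigr)/\sinh[p(b-a)]$, where $J_1 = \int_a^b \sinh[p(b-x)]w(x)\,dx$ and $J_2 = \int_a^b \sinh[p(x-a)]w(x)\,dx$. The substitution $y = a+b-x$ combined with the symmetry of $w$ gives $J_1 = J_2$, so I can replace each integral by $\tfrac{1}{2}(J_1+J_2)$. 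Applying the sum-to-product identity $\sinh A + \sinh B = 2\sinh\tfrac{A+B}{2}\cosh\tfrac{A-B}{2}$ with $A=p(b-x)$, $B=p(x-a)$ collapses the integrand into $2\sinh\tfrac{p(b-a)}{2}\cosh\!\bigl[p(x-\tfrac{a+b}{2})\bigr]$, while the denominator factors as $\sinh[p(b-a)] = 2\sinh\tfrac{p(b-a)}{2}\cosh\tfrac{p(b-a)}{2}$. Cancellation then gives exactly the factor $\cosh^{-1}\tfrac{p(b-a)}{2}$ multiplying the weighted $\cosh$-integral, as required.

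For the \emph{left} inequality, I would apply the gradient inequality of property (ii) at the midpoint $x_0=(a+b)/2$, obtaining for all $y\in[a,b]$
\[
u(y) \geq u\!\left(\frac{a+b}{2}\right)\cosh\!\left[p\Bigl(y-\frac{a+b}{2}\Bigr)\right] + K\,\sinh\!\left[p\Bigl(y-\frac{a+b}{2}\Bigr)\right],
\]
with $K\in[u'_-(x_0),u'_+(x_0)]$ (with $\sinh$ in place of the misprinted $\sin$). Multiplying by $w(y)$ and integrating over $[a,b]$ kills the $K$-term: the function $y\mapsto \sinh\!\bigl[p(y-x_0)\bigr]w(y)$ is odd about $x_0$ (the $\sinh$ factor is odd there and $w$ is symmetric), so its integral vanishes. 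What remains is precisely the asserted lower bound.

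The main obstacle is purely algebraic, and it lies in the upper-bound step: one must first symmetrize $J_1$ and $J_2$ using the $y=a+b-x$ substitution before the sum-to-product identity can collapse the two $\sinh$-integrands into a single $\cosh[p(x-(a+b)/2)]$-integrand. Once this reduction is spotted, the appearance of $\cosh^{-1}\tfrac{p(b-a)}{2}$ is forced by the double-angle factorization of $\sinh[p(b-a)]$, and the whole argument reduces to two short integrations. Checking the $p\to 0$ limit recovers \eqref{1.2}, which provides a useful consistency check.
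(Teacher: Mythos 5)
Your proof is correct. The paper itself states Theorem \ref{th2} without proof, quoting it from \cite{D18b}; your argument --- integrating the defining inequality \eqref{HCF1} against $w$ and symmetrizing via $x\mapsto a+b-x$ plus the sum-to-product identity for the upper bound, and integrating the midpoint gradient inequality of property (ii) (whose $\sinh$-term vanishes by the symmetry of $w$) for the lower bound --- is exactly the standard route used in that source.
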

\begin{remark}Note that, if $p\rightarrow 0$ in \eqref{D2} we get the classical Hermite-Hadamard-Fej\'{e}r inequality \eqref{1.2}.\end{remark}
\begin{theorem}\label{th3} Assume that the function $u:\,I\rightarrow \mathbb{R}$ is hyperbolic p-convex on $I$ and $a,b\in I.$ Assume also that $w:\,I\rightarrow \mathbb{R}$ is a positive, symmetric and integrable function on $[a, b],$ then
\begin{equation}\label{D3}\begin{split}&\int\limits_a^b u(x)w(x)dx\\& \leq \frac{u(a)+u(b)}{2}\cosh^{-1}\left(\frac{p(b-a)}{2}\right)\int\limits_a^b\cosh\left[p\left(x-\frac{a+b}{2}\right)\right]w(x)dx \\&+\frac{u(a)-u(b)}{2}\sinh^{-1}\left(\frac{p(b-a)}{2}\right)\int\limits_a^b\sinh\left[p\left(x-\frac{a+b}{2}\right)\right]w(x)dx.\end{split}\end{equation}
\end{theorem}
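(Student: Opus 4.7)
The plan is to prove \eqref{D3} by directly integrating the defining hyperbolic $p$-convexity inequality \eqref{HCF1} against the weight $w$. Since $w(x)\ge 0$ on $[a,b]$, multiplying \eqref{HCF1} by $w(x)$ and integrating over $[a,b]$ yields
\begin{equation*}
\int_a^b u(x)w(x)\,dx \le \frac{u(a)}{\sinh[p(b-a)]}\int_a^b \sinh[p(b-x)]\,w(x)\,dx + \frac{u(b)}{\sinh[p(b-a)]}\int_a^b \sinh[p(x-a)]\,w(x)\,dx.
\end{equation*}
No further appeal to convexity is needed from this point on; the remaining work is purely a trigonometric rearrangement of the right-hand side.

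Next I would pass to the centered variable $t := x-\tfrac{a+b}{2}$ and set $L := \tfrac{b-a}{2}$, so that $b-x = L-t$ and $x-a = L+t$. Applying the hyperbolic addition formulas
\begin{equation*}
\sinh[p(L\pm t)] = \sinh(pL)\cosh(pt)\pm \cosh(pL)\sinh(pt)
\end{equation*}
converts the numerator $u(a)\sinh[p(b-x)] + u(b)\sinh[p(x-a)]$ into the combination
\begin{equation*}
[u(a)+u(b)]\sinh(pL)\cosh(pt) + [u(b)-u(a)]\cosh(pL)\sinh(pt).
\end{equation*}
Dividing by $\sinh[p(b-a)] = 2\sinh(pL)\cosh(pL)$ (the double-angle identity) then produces the factors $\tfrac12\cosh^{-1}(pL)$ and $\tfrac12\sinh^{-1}(pL)$ in front of the $\cosh$- and $\sinh$-weighted integrals of $w$, reproducing exactly the right-hand side of \eqref{D3}.

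The main obstacle, such as it is, is purely bookkeeping: one must track signs carefully through the addition formulas so that the coefficient in front of the $\sinh$ integral is displayed in the form written in \eqref{D3}. I would also observe that the symmetry hypothesis on $w$ plays no role in this pointwise-then-integrated argument; in fact, symmetry of $w$ about $\tfrac{a+b}{2}$ forces $\int_a^b \sinh[p(x-\tfrac{a+b}{2})]w(x)\,dx = 0$, so the second summand on the right of \eqref{D3} vanishes and \eqref{D3} collapses to the upper bound in \eqref{D2}. The value added by the theorem is thus the explicit decomposition of the bound into its even and odd components about the midpoint, which is precisely what the approach above delivers.
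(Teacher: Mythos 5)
Your approach is sound, and note first that the paper contains no proof of Theorem \ref{th3} to compare against: the statement is imported verbatim from \cite{D18b}. Your route --- multiply the defining chord inequality \eqref{HCF1} by $w\ge 0$, integrate, recentre at $t=x-\tfrac{a+b}{2}$, and expand via the hyperbolic addition and double-angle formulas --- is the natural argument and is certainly the one underlying the cited source. Your further observation, that symmetry of $w$ kills the $\sinh$-weighted integral so that \eqref{D3} collapses to the upper bound of \eqref{D2}, is correct and worth stating.

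One sign deserves more care than you give it. Your own expansion produces the odd component with coefficient $[u(b)-u(a)]\cosh(pL)\sinh(pt)$, hence after dividing by $2\sinh(pL)\cosh(pL)$ the second summand comes out as
\[
\frac{u(b)-u(a)}{2}\,\sinh^{-1}\!\left(\frac{p(b-a)}{2}\right)\int_a^b\sinh\left[p\left(x-\frac{a+b}{2}\right)\right]w(x)\,dx,
\]
whereas \eqref{D3} displays the opposite coefficient $\tfrac{u(a)-u(b)}{2}$; so the claim that your computation ``reproduces exactly'' the right-hand side of \eqref{D3} is not literally true. The proof of the theorem \emph{as stated} is nevertheless complete, but only because of the remark you relegate to an aside: the symmetry hypothesis forces the $\sinh$-integral to vanish, so the term with the disputed sign contributes nothing. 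You should promote that remark from an observation to the step that closes the argument. The sign is not cosmetic: your derivation never otherwise uses symmetry, and for non-symmetric $w$ the inequality holds with your coefficient but fails with the displayed one --- already in the limiting case $p\to 0$ the displayed version would assert $\int_a^b uw\,dx\le \frac{u(a)+u(b)}{2}\int_a^b w\,dx+\frac{u(a)-u(b)}{b-a}\int_a^b\left(x-\frac{a+b}{2}\right)w(x)\,dx$ for convex $u$, which is violated by affine $u$ with $u(b)>u(a)$ and $w$ concentrated near $b$. So your version is the correct general statement, and the symmetry hypothesis is what reconciles it with \eqref{D3}.
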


\section{Main results}
In this section, we formulate the main results of the paper.
\subsection{Fractional analogues of Hermite-Hadamard inequality for hyperbolic p-convex functions}
Our first observation is formulated by the following theorem.
\begin{theorem}\label{th4}
Assume that the function $u:\,I\rightarrow \mathbb{R}$ is hyperbolic p-convex function on $I.$ Then for any $a,b\in I$ we have
\begin{equation}\label{D4}u\left(\frac{a+b}{2}\right)\mathcal{C}_{\alpha}(1) \leq I^{\alpha}_{a+}u(b)+ I^{\alpha}_{b-}u(a) \leq \frac{u(a)+u(b)}{2}\cosh^{-1}\left(p(b-a)\right)\mathcal{C}_\alpha(1),\end{equation}
where $\mathcal{C}_{\alpha}(1):=\int\limits_a^b\cosh\left[p\left(x-\frac{a+b}{2}\right)\right]\frac{(b-x)^{\alpha-1}+(x-a)^{\alpha-1}}{\Gamma(\alpha)}dx.$
\end{theorem}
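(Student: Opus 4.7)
The plan is to view \eqref{D4} as the specialization of Theorem~\ref{th2} to a particular choice of the weight $w$. Concretely, I would take
$$w(x) := \frac{(b-x)^{\alpha-1}+(x-a)^{\alpha-1}}{\Gamma(\alpha)}, \qquad x\in(a,b),$$
and check that $w$ meets the hypotheses of Theorem~\ref{th2}: it is positive on $(a,b)$ and integrable there since $\alpha>0$, while the change of variable $x\mapsto a+b-x$ interchanges the two summands $(b-x)^{\alpha-1}$ and $(x-a)^{\alpha-1}$, giving symmetry about $(a+b)/2$.

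Next, I would identify the three quantities appearing in \eqref{D4} with the three integrals appearing in \eqref{D2} for this weight. Reading the Riemann--Liouville operators as
$$I^\alpha_{a+}u(b)=\frac{1}{\Gamma(\alpha)}\int_a^b (b-s)^{\alpha-1}u(s)\,ds,\qquad I^\alpha_{b-}u(a)=\frac{1}{\Gamma(\alpha)}\int_a^b (s-a)^{\alpha-1}u(s)\,ds,$$
addition immediately yields $I^\alpha_{a+}u(b)+I^\alpha_{b-}u(a)=\int_a^b u(x)\,w(x)\,dx$, while by its very definition
$$\mathcal{C}_\alpha(1)=\int_a^b \cosh\!\left[p\!\left(x-\tfrac{a+b}{2}\right)\right] w(x)\,dx.$$
Plugging these identifications into \eqref{D2} then reproduces \eqref{D4} line by line, the middle term matching the Riemann--Liouville sum and the outer terms picking up the factor $\mathcal{C}_\alpha(1)$.

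I do not expect any substantial obstacle: once the right weight is written down, the proof reduces to matching definitions and invoking Theorem~\ref{th2}. The only point requiring a moment of verification is the symmetry of $w$ with respect to $(a+b)/2$, which as noted is immediate from the substitution $x\mapsto a+b-x$. As a consistency check I would confirm that taking $p\to 0$ turns $\cosh[p(x-\tfrac{a+b}{2})]$ and the normalizing $\cosh^{-1}$ factor into $1$, so that $\mathcal{C}_\alpha(1)\to \frac{2(b-a)^\alpha}{\Gamma(\alpha+1)}$ and \eqref{D4} collapses to the classical Riemann--Liouville Hermite--Hadamard inequality \eqref{FHH} of Sarikaya et al., which is the expected classical limit.
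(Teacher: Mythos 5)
Your proposal is correct and follows essentially the same route as the paper: choose the weight $w(x)=\frac{(b-x)^{\alpha-1}+(x-a)^{\alpha-1}}{\Gamma(\alpha)}$, verify that it is positive, integrable and symmetric about $\frac{a+b}{2}$, identify $\int_a^b u(x)w(x)\,dx$ with $I^{\alpha}_{a+}u(b)+I^{\alpha}_{b-}u(a)$, and invoke Theorem~\ref{th2}. (As a side remark, applying \eqref{D2} actually produces the factor $\cosh^{-1}\frac{p(b-a)}{2}$, so the $\cosh^{-1}\left(p(b-a)\right)$ printed in \eqref{D4} appears to be a typographical slip in the statement rather than a defect of your argument.)
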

\begin{proof}Let us suppose that all assumptions of Theorem are satisfied. Let us define the function $w$ of Theorem \ref{th2} by
$$w(x)=\frac{1}{\Gamma(\alpha)}\left[(b-x)^{\alpha-1}+(x-a)^{\alpha-1}\right],\, a<x<b.$$ Clearly, $w$ is a positive, symmetric and integrable function
on $[a,b].$ Moreover, for all $x\in(a,b),$ we get $$w(a+b-x)=(b-(a+b-x))^{\alpha-1}+((a+b-x)-a)^{\alpha-1}=w(x).$$ Moreover, we have
\begin{align*}\int\limits_a^b u(x)w(x)dx&=\frac{1}{\Gamma(\alpha)}\int\limits_a^b(b-x)^{\alpha-1} u(x)dx+\frac{1}{\Gamma(\alpha)}\int\limits_a^b(x-a)^{\alpha-1} u(x)dx\\&=I^{\alpha}_{b-}u(a)+I^{\alpha}_{a+}u(b).\end{align*} Therefore, from \eqref{D2} we obtain inequality \eqref{D4}.
\end{proof}
\begin{remark} Inequalities \eqref{1.1} and \eqref{FHH} are special cases of inequality \eqref{D4}.
\begin{itemize}
  \item If $p\rightarrow 0$ we have $\lim\limits_{p\rightarrow 0}\mathcal{C}^1_\alpha(1)=\frac{2(b-a)^\alpha}{\Gamma(\alpha+1)}.$ In this case, inequality \eqref{D4} coincide with the inequality \eqref{FHH};
  \item If $p\rightarrow 0$ and $\alpha\rightarrow 1$ in \eqref{D4}, then we have classical Hermite-Hadamard inequality \eqref{1.1}.
\end{itemize}
\end{remark}
\begin{theorem}\label{th5}
Assume that the function $u:\,I\rightarrow \mathbb{R}$ is hyperbolic p-convex function on $I.$ Then for any $a,b\in I$ we have
\begin{equation}\label{D5}u\left(\frac{a+b}{2}\right)\mathcal{C}^2_{\alpha}(1) \leq \mathcal{I}^{\alpha}_{a+}u(b)+ \mathcal{I}^{\alpha}_{b-}u(a) \leq \frac{u(a)+u(b)}{2}\cosh^{-1}\left(p(b-a)\right)\mathcal{C}^2_\alpha(1),\end{equation}
where $\mathcal{C}^2_{\alpha}(1):=\int\limits_a^b\cosh\left[p\left(x-\frac{a+b}{2}\right)\right]\frac{\exp\left(-\frac{1-\alpha}{\alpha}(b-x)\right) +\exp\left(-\frac{1-\alpha}{\alpha}(x-a)\right)}{\alpha}dx.$
\end{theorem}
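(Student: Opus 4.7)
The plan is to mirror the proof of Theorem \ref{th4} exactly, substituting the exponential kernels from Definition of $\mathcal{I}^\alpha_{a+}$ and $\mathcal{I}^\alpha_{b-}$ (equations \eqref{I-1} and \eqref{I-2}) for the Riemann--Liouville power kernels. Concretely, I would apply Theorem \ref{th2} with the weight
\[
w(x) = \frac{1}{\alpha}\left[\exp\left(-\frac{1-\alpha}{\alpha}(b-x)\right) + \exp\left(-\frac{1-\alpha}{\alpha}(x-a)\right)\right],\qquad x\in(a,b).
\]

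First I would check that $w$ satisfies the three hypotheses of Theorem \ref{th2}: positivity is immediate since both exponentials are positive; integrability is immediate since $w$ is continuous on $[a,b]$; and symmetry with respect to $(a+b)/2$ follows from the substitution $x\mapsto a+b-x$, which simply swaps the two exponential terms, giving $w(a+b-x)=w(x)$. Next I would compute the weighted integral of $u$ against $w$ and show it equals the left-hand side of \eqref{D5}: splitting the integral and reading off definitions \eqref{I-1} and \eqref{I-2}, one gets
\[
\int_a^b u(x)w(x)\,dx = \frac{1}{\alpha}\int_a^b \exp\!\left(-\frac{1-\alpha}{\alpha}(b-x)\right)u(x)\,dx + \frac{1}{\alpha}\int_a^b \exp\!\left(-\frac{1-\alpha}{\alpha}(x-a)\right)u(x)\,dx = \mathcal{I}^{\alpha}_{a+}u(b)+\mathcal{I}^{\alpha}_{b-}u(a).
\]
By definition of $\mathcal{C}^2_\alpha(1)$, the weighted $\cosh$-integral appearing in \eqref{D2} becomes exactly $\mathcal{C}^2_\alpha(1)$. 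Substituting all three pieces into \eqref{D2} yields \eqref{D5} with no further work.

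There is no real obstacle here: the construction of $w$ is forced by matching the exponential kernel of $\mathcal{I}^\alpha$, and all the analytic content (the hyperbolic p-convex Hermite--Hadamard--Fej\'er inequality) has already been absorbed into Theorem \ref{th2}. The only point that deserves a one-line check is the symmetry of $w$, and the only bookkeeping is the factor of $1/\alpha$ from the fractional-integral normalization (absent in the $\Gamma(\alpha)^{-1}$ normalization used for Theorem \ref{th4}); this factor propagates harmlessly into the definition of $\mathcal{C}^2_\alpha(1)$ on both sides of the chain of inequalities, so no cancellation issues arise. As in the remark after Theorem \ref{th4}, one could additionally note the limiting behavior as $p\to 0$ to recover the classical fractional Hermite--Hadamard inequality \eqref{FHH2}, but this is not required for the proof of \eqref{D5} itself.
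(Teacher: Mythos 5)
Your proposal is correct and is essentially identical to the paper's own proof: the authors take the same weight $w(x)=\frac{1}{\alpha}\left[\exp\left(-\frac{1-\alpha}{\alpha}(b-x)\right)+\exp\left(-\frac{1-\alpha}{\alpha}(x-a)\right)\right]$, verify its positivity, symmetry and integrability, identify $\int_a^b u(x)w(x)\,dx$ with $\mathcal{I}^{\alpha}_{a+}u(b)+\mathcal{I}^{\alpha}_{b-}u(a)$, and conclude from Theorem \ref{th2}. (As in the paper's own argument, note that \eqref{D2} actually produces the factor $\cosh^{-1}\left(\frac{p(b-a)}{2}\right)$ in the upper bound rather than the $\cosh^{-1}\left(p(b-a)\right)$ printed in \eqref{D5}; this discrepancy lies in the statement, not in your proof.)
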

\begin{proof}Suppose that all assumptions of Theorem are satisfied. Let us define the function $w$ of Theorem \ref{th2} by
$$w(x)=\frac{1}{\alpha}\left[\exp\left(-\frac{1-\alpha}{\alpha}(b-x)\right) +\exp\left(-\frac{1-\alpha}{\alpha}(x-a)\right)\right],\, a<x<b.$$ Clearly, $w$ is a positive, symmetric and integrable function
on $[a,b].$ Moreover, for all $x\in(a,b),$ we get \begin{align*}\alpha w(a+b-x)&=\left[\exp\left(-\frac{1-\alpha}{\alpha}(b-(a+b-x))\right) +\exp\left(-\frac{1-\alpha}{\alpha}((a+b-x)-a)\right)\right]\\& =\left[\exp\left(-\frac{1-\alpha}{\alpha}(x-a)\right) +\exp\left(-\frac{1-\alpha}{\alpha}(b-x)\right)\right]=\alpha w(x).\end{align*} Moreover, we have
\begin{align*}&\int\limits_a^b u(x)w(x)dx\\&=\frac{1}{\alpha}\int\limits_a^b\exp\left(-\frac{1-\alpha}{\alpha}(b-x)\right) u(x)dx+\frac{1}{\alpha}\int\limits_a^b \exp\left(-\frac{1-\alpha}{\alpha}(x-a)\right) u(x)dx\\&=\mathcal{I}^{\alpha}_{b-}u(a)+\mathcal{I}^{\alpha}_{a+}u(b).\end{align*} Therefore, from \eqref{D2} we obtain inequality \eqref{D5}.
\end{proof}
\begin{remark} Inequalities \eqref{1.1} and \eqref{FHH} are special cases of inequality \eqref{D5}.
\begin{itemize}
  \item If $p\rightarrow 0$ we have $\lim\limits_{p\rightarrow 0}\mathcal{C}^2_\alpha(1)=\frac{2\exp\left(-\frac{1-\alpha}{\alpha}(b-a)\right)}{1-\alpha}.$ In this case, inequality \eqref{D5} coincide with the inequality \eqref{FHH2};
  \item If $p\rightarrow 0$ and $\alpha\rightarrow 1$ in \eqref{D5}, then we have classical Hermite-Hadamard inequality \eqref{1.1}.
\end{itemize}
\end{remark}

\subsection{Fractional analogues of Hermite-Hadamard-Fej\'{e}r inequality for hyperbolic p-convex functions}
\begin{theorem}\label{th6} Assume that the function $u:\,I\rightarrow \mathbb{R}$ is hyperbolic p-convex on $I$ and
$a,b\in I.$ Assume also that $v:\,I\rightarrow \mathbb{R}$ is a positive, symmetric and integrable function on $[a, b],$ then we have
\begin{equation}\label{D6}\begin{split}u\left(\frac{a+b}{2}\right)\mathcal{C}^1_\alpha(v) &\leq I^\alpha_{a+}\left[u(b)v(b)\right]+I^\alpha_{b-}\left[u(a)v(a)\right] \\& \leq \frac{u(a)+u(b)}{2}\cosh^{-1}\frac{p(b-a)}{2}\mathcal{C}^1_\alpha(v),\end{split}\end{equation}
where $\mathcal{C}^1_{\alpha}(v):=\int\limits_a^b\cosh\left[p\left(x-\frac{a+b}{2}\right)\right]\frac{(b-x)^{\alpha-1}+(x-a)^{\alpha-1}}{\Gamma(\alpha)}v(x)dx.$
\end{theorem}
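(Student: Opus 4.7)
The plan is to reduce Theorem \ref{th6} to the Hermite-Hadamard-Fej\'{e}r inequality \eqref{D2} for hyperbolic p-convex functions, following the exact template used in the proofs of Theorems \ref{th4} and \ref{th5}. The key observation is that the sum of fractional integrals $I^\alpha_{a+}[u(b)v(b)]+I^\alpha_{b-}[u(a)v(a)]$ can be rewritten as $\int_a^b u(x)\,w(x)\,dx$ for a specific choice of symmetric weight $w$, which then plays the role of the weight in Theorem \ref{th2}. Concretely, I would set
$$w(x) := \frac{(b-x)^{\alpha-1}+(x-a)^{\alpha-1}}{\Gamma(\alpha)}\, v(x), \qquad x\in(a,b),$$
so that the Riemann-Liouville kernel and the given weight $v$ are packaged into a single admissible weight.

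Next, I would verify that $w$ satisfies the hypotheses of Theorem \ref{th2}. Positivity is immediate, since $v>0$ and the kernel is nonnegative on $(a,b)$. Integrability follows from the integrability of $v$ together with the fact that $(b-x)^{\alpha-1}$ and $(x-a)^{\alpha-1}$ are integrable on $(a,b)$ for any $\alpha>0$. Symmetry is the cleanest point: the substitution $x\mapsto a+b-x$ interchanges the two kernel summands and leaves $v$ invariant by hypothesis, whence $w(a+b-x)=w(x)$. Splitting the integral $\int_a^b u(x)\,w(x)\,dx$ along the two kernel summands then yields, by the very definitions of $I^\alpha_{a+}$ and $I^\alpha_{b-}$,
$$\int_a^b u(x)\,w(x)\,dx \;=\; \frac{1}{\Gamma(\alpha)}\int_a^b(b-x)^{\alpha-1}u(x)v(x)\,dx + \frac{1}{\Gamma(\alpha)}\int_a^b(x-a)^{\alpha-1}u(x)v(x)\,dx,$$
which equals $I^\alpha_{a+}[u(b)v(b)]+I^\alpha_{b-}[u(a)v(a)]$. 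Meanwhile, the outer weighted integral $\int_a^b \cosh[p(x-(a+b)/2)]\,w(x)\,dx$ that appears on both sides of \eqref{D2} is, by definition, precisely $\mathcal{C}^1_\alpha(v)$. Substituting the three identified quantities into \eqref{D2} gives \eqref{D6}.

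I do not expect any serious obstacle. The argument is a verbatim repetition of the proof of Theorem \ref{th4}, with the extra factor $v(x)$ harmlessly absorbed into the weight; the symmetry of $v$ is exactly what is needed to preserve the symmetry of $w$ with respect to the midpoint $\frac{a+b}{2}$. The only mildly delicate point is integrability of the products $(b-x)^{\alpha-1}v(x)$ and $(x-a)^{\alpha-1}v(x)$, which is implicit in the assumption that the fractional integrals on the right-hand side of \eqref{D6} are well-defined; no further argument is required beyond invoking Theorem \ref{th2}.
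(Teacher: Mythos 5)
Your proposal is correct and follows exactly the same route as the paper's own proof: absorb the Riemann--Liouville kernel and the weight $v$ into a single symmetric, positive, integrable weight $w(x)=\frac{v(x)}{\Gamma(\alpha)}\left[(b-x)^{\alpha-1}+(x-a)^{\alpha-1}\right]$, identify $\int_a^b u(x)w(x)\,dx$ with $I^\alpha_{a+}\left[u(b)v(b)\right]+I^\alpha_{b-}\left[u(a)v(a)\right]$ and the cosh-weighted integral of $w$ with $\mathcal{C}^1_\alpha(v)$, and then invoke Theorem \ref{th2}. Nothing is missing.
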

\begin{proof}Let us suppose that all assumptions of Theorem are satisfied. Let us define the function $w$ of Theorem \ref{th2} by
$$w(x)=\frac{v(x)}{\Gamma(\alpha)}\left[(b-x)^{\alpha-1}+(x-a)^{\alpha-1}\right],\, a<x<b.$$ Clearly, $w$ is a positive, symmetric and integrable function
on $[a,b].$ Moreover, for all $x\in(a,b),$ we get $w(a+b-x)=w(x).$ Moreover, we have
\begin{align*}\frac{1}{\Gamma(\alpha)}\int\limits_a^b u(x)v(x)\left[(b-x)^{\alpha-1}+(x-a)^{\alpha-1}\right]dx&=I^\alpha_{a+}\left[u(b)v(b)\right]+I^\alpha_{b-}\left[u(a)v(a)\right].\end{align*} Therefore, from \eqref{D2} we obtain inequality \eqref{D6}.
\end{proof}
\begin{remark} Inequalities \eqref{1.2} and \eqref{FHHF} are special cases of inequality \eqref{D6}.
\begin{itemize}
  \item If $p\rightarrow 0$ we have $\lim\limits_{p\rightarrow 0}\mathcal{C}^1_\alpha(v)=I^\alpha_{a+}v(b)+I^\alpha_{b-}v(a).$ In this case, inequality \eqref{D6} coincide with the inequality \eqref{FHHF};
  \item If $p\rightarrow 0$ and $\alpha\rightarrow 1$ in \eqref{D6}, then we have classical Hermite-Hadamard-Fej\'{e}r inequality \eqref{1.2}.
\end{itemize}
\end{remark}
The following theorems are proved similarly.
\begin{theorem}\label{th7} Assume that the function $u:\,I\rightarrow \mathbb{R}$ is hyperbolic p-convex on $I$ and
$a,b\in I.$ Assume also that $v:\,I\rightarrow \mathbb{R}$ is a positive, symmetric and integrable function on $[a, b],$ then we have
\begin{equation}\label{D7}\begin{split}u\left(\frac{a+b}{2}\right)\mathcal{C}^2_\alpha(v) &\leq \mathcal{I}^\alpha_{a+}\left[u(b)v(b)\right]+\mathcal{I}^\alpha_{b-}\left[u(a)v(a)\right] \\& \leq \frac{u(a)+u(b)}{2}\cosh^{-1}\frac{p(b-a)}{2}\mathcal{C}^2_\alpha(v),\end{split}\end{equation}
where $\mathcal{C}^2_{\alpha}(v):=\int\limits_a^b\cosh\left[p\left(x-\frac{a+b}{2}\right)\right]\frac{\exp\left(-\frac{1-\alpha}{\alpha}(b-x)\right) +\exp\left(-\frac{1-\alpha}{\alpha}(x-a)\right)}{\alpha}v(x)dx.$
\end{theorem}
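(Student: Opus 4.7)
The plan is to reduce Theorem \ref{th7} to Theorem \ref{th2}, mirroring the proof of Theorem \ref{th6} verbatim but swapping the power-law kernel of the Riemann--Liouville integrals for the exponential kernel appearing in the definition \eqref{I-1}--\eqref{I-2} of $\mathcal{I}^\alpha_{a+}$ and $\mathcal{I}^\alpha_{b-}$.

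First I would set
\[
w(x) := \frac{v(x)}{\alpha}\left[\exp\!\left(-\frac{1-\alpha}{\alpha}(b-x)\right) + \exp\!\left(-\frac{1-\alpha}{\alpha}(x-a)\right)\right], \quad a<x<b,
\]
and check that $w$ satisfies the three hypotheses of Theorem \ref{th2}. Positivity and integrability are immediate, since $v$ is nonnegative and integrable and the exponentials are bounded and continuous on $[a,b]$. Symmetry $w(a+b-x)=w(x)$ follows by the same calculation performed inside the proof of Theorem \ref{th5}: the substitution $x\mapsto a+b-x$ merely interchanges the two exponential summands, and $v(a+b-x)=v(x)$ by the hypothesis on $v$.

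Next I would apply Theorem \ref{th2} with this weight $w$. The outer bounds in \eqref{D2} already have the desired shape; indeed
\[
\int_a^b \cosh\!\left[p\!\left(x-\tfrac{a+b}{2}\right)\right] w(x)\, dx = \mathcal{C}^2_\alpha(v),
\]
so the left-most and right-most quantities of \eqref{D2} coincide with those of \eqref{D7}. It then remains to identify the middle quantity: unfolding the definitions \eqref{I-1}--\eqref{I-2} of the fractional integrals applied to $uv$ and evaluated at the endpoints,
\[
\mathcal{I}^\alpha_{a+}[u(b)v(b)] + \mathcal{I}^\alpha_{b-}[u(a)v(a)] = \frac{1}{\alpha}\int_a^b \left[\exp\!\left(-\tfrac{1-\alpha}{\alpha}(b-x)\right)+\exp\!\left(-\tfrac{1-\alpha}{\alpha}(x-a)\right)\right] u(x)v(x)\, dx,
\]
which is exactly $\int_a^b u(x) w(x)\, dx$, the middle quantity of \eqref{D2}.

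There is no substantive obstacle: the result is a direct specialization of Theorem \ref{th2}, and every required property of $w$ has already appeared, either in Theorem \ref{th6} (the mechanism of absorbing $v(x)$ into the weight) or in Theorem \ref{th5} (the symmetry verification for the exponential kernel). The only bookkeeping point that warrants attention is the factor $\tfrac{1}{\alpha}$ in the definition of $\mathcal{I}^\alpha_{a+}$ and $\mathcal{I}^\alpha_{b-}$, which must be carried through consistently so that the factor matches the one built into $\mathcal{C}^2_\alpha(v)$.
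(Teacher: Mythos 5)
Your proposal is correct and follows exactly the route the paper intends: the paper gives no separate proof of Theorem \ref{th7}, stating only that it ``is proved similarly'' to Theorem \ref{th6}, i.e., by choosing the weight $w(x)=\frac{v(x)}{\alpha}\bigl[\exp\bigl(-\frac{1-\alpha}{\alpha}(b-x)\bigr)+\exp\bigl(-\frac{1-\alpha}{\alpha}(x-a)\bigr)\bigr]$ and applying Theorem \ref{th2}, which is precisely what you do. Your verification of symmetry and the identification of the middle term with $\mathcal{I}^\alpha_{a+}[u(b)v(b)]+\mathcal{I}^\alpha_{b-}[u(a)v(a)]$ match the computations the paper carries out in Theorems \ref{th5} and \ref{th6}.
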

\begin{remark} Inequalities \eqref{1.2} and \eqref{FHHF} are special cases of inequality \eqref{D7}.
\begin{itemize}
  \item If $p\rightarrow 0$ we have $\lim\limits_{p\rightarrow 0}\mathcal{C}^2_\alpha(v)=\mathcal{I}^\alpha_{a+}v(b)+\mathcal{I}^\alpha_{b-}v(a).$ In this case, inequality \eqref{D7} coincide with the inequality \eqref{FHHF2};
  \item If $p\rightarrow 0$ and $\alpha\rightarrow 1$ in \eqref{D7}, then we have classical Hermite-Hadamard-Fej\'{e}r inequality \eqref{1.2}.
\end{itemize}
\end{remark}
\begin{theorem}\label{th8} Assume that the function $u:\,I\rightarrow \mathbb{R}$ is hyperbolic p-convex on $I$ and $a,b\in I.$ Assume also that $v:\,I\rightarrow \mathbb{R}$ is a positive, symmetric and integrable function on $[a, b],$ then
\begin{equation}\label{D8}\begin{split}I^\alpha_{a+}\left[u(b)v(b)\right]+I^\alpha_{b-}\left[u(a)v(a)\right]& \leq \frac{u(a)+u(b)}{2}\cosh^{-1}\left(\frac{p(b-a)}{2}\right)\mathcal{C}_\alpha^1(v) \\&+\frac{u(a)-u(b)}{2}\sinh^{-1}\left(\frac{p(b-a)}{2}\right)\mathcal{S}^1_\alpha(v),\end{split}\end{equation}
where $$\mathcal{S}^1_\alpha(v):=\int\limits_a^b\sinh\left[p\left(x-\frac{a+b}{2}\right)\right]\frac{(b-x)^{\alpha-1}+(x-a)^{\alpha-1}}{\Gamma(\alpha)}v(x)dx.$$
\end{theorem}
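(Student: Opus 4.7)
The plan is to apply Theorem \ref{th3} directly, choosing the weight function so that the resulting integrals collapse to the Riemann--Liouville fractional expressions appearing in (\ref{D8}). Specifically, I would set
$$w(x) := \frac{v(x)}{\Gamma(\alpha)}\left[(b-x)^{\alpha-1} + (x-a)^{\alpha-1}\right], \quad a < x < b,$$
which mirrors the construction already used in the proof of Theorem \ref{th6}.

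The first step is to verify that $w$ satisfies the hypotheses of Theorem \ref{th3}. Positivity is immediate from $v>0$ and $\alpha>0$. Integrability on $[a,b]$ holds because both $(b-x)^{\alpha-1}$ and $(x-a)^{\alpha-1}$ have at worst weak power singularities at the endpoints when $\alpha\in(0,1)$, and $v\in L^{1}(a,b)$. The symmetry $w(a+b-x)=w(x)$ follows from $v(a+b-x)=v(x)$ combined with the obvious swap of the two summands in the bracket.

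The second step is to identify the three integrals appearing in (\ref{D3}) under this choice of $w$. A direct computation, identical in spirit to the one in the proof of Theorem \ref{th6}, gives
\begin{align*}
\int_a^b u(x)\, w(x)\,dx &= I^{\alpha}_{a+}\!\left[u(b)v(b)\right] + I^{\alpha}_{b-}\!\left[u(a)v(a)\right], \\
\int_a^b \cosh\!\left[p\!\left(x-\tfrac{a+b}{2}\right)\right] w(x)\,dx &= \mathcal{C}^{1}_{\alpha}(v), \\
\int_a^b \sinh\!\left[p\!\left(x-\tfrac{a+b}{2}\right)\right] w(x)\,dx &= \mathcal{S}^{1}_{\alpha}(v).
\end{align*}
Substituting these three identities into (\ref{D3}) yields (\ref{D8}) term by term.

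There is no real obstacle here: the argument reduces to a single substitution into the already-established inequality of Theorem \ref{th3}, in complete parallel with the proof of Theorem \ref{th6}. The only bookkeeping subtlety worth double-checking is the symmetry $w(a+b-x)=w(x)$, which is immediate once the two summands in the bracket are interchanged, and the verification that the Riemann--Liouville definitions line up with the two pieces of $w$ produced by splitting the integral $\int_a^b u(x)v(x)[(b-x)^{\alpha-1}+(x-a)^{\alpha-1}]\,dx$ into its two summands.
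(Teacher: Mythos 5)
Your proposal is correct and is exactly the argument the paper intends: Theorem \ref{th8} is stated after the remark ``the following theorems are proved similarly,'' meaning one applies Theorem \ref{th3} with the weight $w(x)=\frac{v(x)}{\Gamma(\alpha)}\left[(b-x)^{\alpha-1}+(x-a)^{\alpha-1}\right]$, just as in the proof of Theorem \ref{th6}. Your verification of positivity, symmetry, and integrability, and the identification of the three integrals with $I^{\alpha}_{a+}[u(b)v(b)]+I^{\alpha}_{b-}[u(a)v(a)]$, $\mathcal{C}^1_\alpha(v)$, and $\mathcal{S}^1_\alpha(v)$ matches the paper's approach.
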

\begin{remark} If $\alpha\rightarrow 1,$ we have $$\lim\limits_{\alpha\rightarrow 1}\mathcal{C}^1_\alpha(v)=\int\limits_a^b\cosh\left[p\left(x-\frac{a+b}{2}\right)\right]v(x)dx$$ and $$\lim\limits_{\alpha\rightarrow 1}\mathcal{S}^1_\alpha(v)=\int\limits_a^b\sinh\left[p\left(x-\frac{a+b}{2}\right)\right]v(x)dx.$$ In this case, inequality \eqref{D8} coincide with the inequality \eqref{D3}.
\end{remark}
\begin{theorem}\label{th9} Assume that the function $u:\,I\rightarrow \mathbb{R}$ is hyperbolic p-convex on $I$ and $a,b\in I.$ Assume also that $v:\,I\rightarrow \mathbb{R}$ is a positive, symmetric and integrable function on $[a, b],$ then
\begin{equation}\label{D9}\begin{split}\mathcal{I}^\alpha_{a+}\left[u(b)v(b)\right]+\mathcal{I}^\alpha_{b-}\left[u(a)v(a)\right]& \leq \frac{u(a)+u(b)}{2}\cosh^{-1}\left(\frac{p(b-a)}{2}\right)\mathcal{C}_\alpha^2(v) \\&+\frac{u(a)-u(b)}{2}\sinh^{-1}\left(\frac{p(b-a)}{2}\right)\mathcal{S}^2_\alpha(v),\end{split}\end{equation}
where $\mathcal{S}^2_\alpha(v):=\int\limits_a^b\sinh\left[p\left(x-\frac{a+b}{2}\right)\right]\frac{\exp\left(-\frac{1-\alpha}{\alpha}(b-x)\right) +\exp\left(-\frac{1-\alpha}{\alpha}(x-a)\right)}{\alpha}v(x)dx.$
\end{theorem}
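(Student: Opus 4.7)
The plan is to proceed in the same spirit as Theorem \ref{th7}: apply Theorem \ref{th3} with a weight function $w$ chosen so that all three integrals appearing in \eqref{D3} reproduce the quantities on the right-hand side of \eqref{D9}. The natural choice, dictated by the form of $\mathcal{C}^2_\alpha(v)$ and $\mathcal{S}^2_\alpha(v)$, is
$$w(x) := \frac{v(x)}{\alpha}\left[\exp\left(-\frac{1-\alpha}{\alpha}(b-x)\right) + \exp\left(-\frac{1-\alpha}{\alpha}(x-a)\right)\right], \qquad a < x < b.$$
This is exactly the weight already introduced in the proof of Theorem \ref{th7}, where its positivity, integrability on $[a,b]$, and symmetry $w(a+b-x)=w(x)$ about $(a+b)/2$ were verified. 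The symmetry uses two ingredients: $v$ is symmetric by hypothesis, and the two exponential summands interchange under the reflection $x\mapsto a+b-x$.

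With this $w$, the matching of integrals is straightforward. Splitting $\int_a^b u(x)w(x)\,dx$ along the two exponential terms yields
\begin{align*}
\int_a^b u(x)w(x)\,dx &= \frac{1}{\alpha}\int_a^b \exp\left(-\tfrac{1-\alpha}{\alpha}(b-x)\right) u(x)v(x)\,dx \\
&\quad + \frac{1}{\alpha}\int_a^b \exp\left(-\tfrac{1-\alpha}{\alpha}(x-a)\right) u(x)v(x)\,dx \\
&= \mathcal{I}^\alpha_{b-}(uv)(a) + \mathcal{I}^\alpha_{a+}(uv)(b),
\end{align*}
which is precisely the left-hand side of \eqref{D9}. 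By direct inspection of the definitions, the two weighted integrals $\int_a^b \cosh[p(x-(a+b)/2)]w(x)\,dx$ and $\int_a^b \sinh[p(x-(a+b)/2)]w(x)\,dx$ coincide with $\mathcal{C}^2_\alpha(v)$ and $\mathcal{S}^2_\alpha(v)$ respectively. Substituting these three identifications into \eqref{D3} produces \eqref{D9} immediately.

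There is no real obstacle here; the proof is essentially a transcription, as the authors themselves indicate with the remark \emph{``the following theorems are proved similarly''}. The only point worth flagging as a sanity check is that, because $w$ is symmetric about $(a+b)/2$ while $x\mapsto\sinh[p(x-(a+b)/2)]$ is antisymmetric about that point, the quantity $\mathcal{S}^2_\alpha(v)$ actually vanishes; consequently the bound \eqref{D9} reduces to the upper bound already contained in \eqref{D7}, which confirms internal consistency rather than improving it.
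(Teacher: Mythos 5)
Your proof is correct and is exactly the argument the paper intends: the paper gives no separate proof of Theorem \ref{th9}, remarking only that it is ``proved similarly,'' i.e.\ by applying Theorem \ref{th3} to the weight $w(x)=\frac{v(x)}{\alpha}\left[\exp\left(-\frac{1-\alpha}{\alpha}(b-x)\right)+\exp\left(-\frac{1-\alpha}{\alpha}(x-a)\right)\right]$ already used for Theorem \ref{th7}. Your closing observation that $\mathcal{S}^2_\alpha(v)=0$ under the symmetry hypothesis, so that \eqref{D9} degenerates to the upper bound already contained in \eqref{D7}, is also correct --- and the same degeneracy is in fact present in Theorem \ref{th3} itself as stated, since there too the weight is assumed symmetric.
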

\begin{remark} If $\alpha\rightarrow 1,$ we have $$\lim\limits_{\alpha\rightarrow 1}\mathcal{C}^2_\alpha(v)=\int\limits_a^b\cosh\left[p\left(x-\frac{a+b}{2}\right)\right]v(x)dx$$ and $$\lim\limits_{\alpha\rightarrow 1}\mathcal{S}^2_\alpha(v)=\int\limits_a^b\sinh\left[p\left(x-\frac{a+b}{2}\right)\right]v(x)dx.$$ In this case, inequality \eqref{D9} coincide with the inequality \eqref{D3}.
\end{remark}
\section*{Acknowledgements} The research of Torebek is financially supported by a grant No.AP05131756 from the Ministry of Science and Education of the Republic of Kazakhstan. No new data was collected or generated during the course of research


\begin{thebibliography}{ABGM15}
\bibitem[A16]{A16} M. S. S. Ali, On certain properties for two classes of generalized convex functions, Abstract and Applied Analysis, (2016), Article ID 4652038, 1--7.

\bibitem[C16]{C16} F. Chen,  Extensions of the Hermite-Hadamard inequality for convex functions via fractional integrals, J. Math. Inequal. {\bf 10}, 1(2016), 75-81.

\bibitem[CK17]{CK17} H. Chen, U.N. Katugampola, Hermite-Hadamard and Hermite-Hadamard-Fejer type inequalities for generalized fractional integrals,  J. Math. Anal. Appl. {\bf 446}, 2(2017), 1274-1291.

\bibitem[DA98]{DA98} S.S. Dragomir, R.P. Agarwal, Two inequalities for differentiable mappings and applications to special means of real numbers and to trapezoidal formula, Appl. Math. lett. {\bf 11}, 5 (1998), 91-95.

\bibitem[DP00]{DP00} S.S. Dragomir, C.E.M. Pearce, Selected topics on Hermite-Hadamard inequalities and applications, RGMIA Monographs, Victoria University, 2000.

\bibitem[D18a]{D18a} S.S. Dragomir, Some inequalities of Hermite-Hadamard type for hyperbolic p-convex functions. RGMIA Res. Rep. Coll. {\bf 21}, Art. 13, (2018), 1--11.

\bibitem[D18b]{D18b} S.S. Dragomir, Some inequalities of Fej\'{e}r type for hyperbolic p-convex functions. RGMIA Res. Rep. Coll. {\bf 21}, Art. 14, (2018), 1--10.

\bibitem[F06]{F06} L. Fej\'{e}r,  Uberdie Fourierreihen, II, Math., Naturwise. Anz Ungar. Akad.Wiss, {\bf 24}, (1906), 369-390 (in Hungarian).

\bibitem[H93]{H1893} J. Hadamard, Etude sur les proprietes des fonctions entieres et en particulier d'une fonction considree par Riemann, J. Math. Pures et Appl. {\bf 58},  (1893), 171-215.

\bibitem[H83]{H1883} Ch. Hermite, Sur deux limites d'une integrale definie, Mathesis {\bf 3}, (1883), 82.

\bibitem[HYT14]{HYT14} S.R. Hwang, S.Y. Yeh, K.L. Tseng, Refinements and similar extensions of Hermite-Hadamard inequality for fractional integrals and their applications,  Appl. Math. Computat. {\bf 249}, (2014), 103-113.

\bibitem[I16]{I16} I. Iscan, On generalization of different type inequalities for harmonically quasi-convex functions via fractional integrals, Appl. Math. Computat. {\bf 275}, (2016),  287-298.

\bibitem[JS16]{JS16} M. Jleli, B. Samet, On Hermite-Hadamard type inequalities via fractional integrals of a function with respect to another function,   J. Nonlinear Sci. Appl. {\bf 9}, 3(2016),  1252-1260.

\bibitem[KST06]{Kilbas} A. A. Kilbas, H. M. Srivastava and J. J. Trujillo. \emph{Theory and Applications of Fractional Differential Equations}. North-Holland Mathematics Studies. (2006).

\bibitem[KS18]{KS18} M. Kirane, B. Samet, Discussion of some inequalities via fractional integrals, Journal of Inequalities and Applications. {\bf 2018}, (2018), Art. 19, 1--10.

\bibitem[KT16]{KT16} M. Kirane, B.T. Torebek, Hermite-Hadamard, Hermite-Hadamard-Fejer, Dragomir-Agarwal and Pachpatte type inequalities for convex functions via fractional integrals, (2016). arXiv:1701.00092v1

\bibitem[P03]{P03} B.G. Pachpatte, On some inequalities for convex functions, RGMIA Res. Rep. Coll. {\bf 6} (E), 2003.

\bibitem[PPT92]{PPT92} J.E. Pe\v{c}ari\'{c}, F. Proschan, Y.L. Tong, Convex Functions, Partial Orderings and Statistical Applications, Academic Press, Boston, 1992.

\bibitem[SSYB13]{SSYB13} M.Z. Sarikaya, E. Set,  H. Yaldiz,  N. Ba\c{s}ak,  Hermite-Hadamard's inequalities for fractional integrals and related fractional inequalities, Math. Comput. Modelling {\bf 57}, 9-10 (2013), 2403-2407.

\bibitem[WLFZ12]{WLFZ12} J. Wang, X. Li, M. Fe\v{c}kan, Y. Zhou, Hermite-Hadamard-type inequalities for Riemann-Liouville fractional integrals via two kinds of convexity, Appl. Anal. {\bf 92}, 11(2012), 2241-2253.

\bibitem[ZW13]{ZW13} Y. Zhang,  J. Wang,  On some new Hermite-Hadamard inequalities involving Riemann-Liouville fractional integrals, J. Inequal. Appl. {\bf 2013}, 220(2013), 27 pp.


\end{thebibliography}
\end{document}